\newcommand {\al}   {\alpha}          \newcommand {\bt}  {\beta}
\newcommand {\gam } {\gamma}
     \newcommand {\pl}   {\partial}
\newcommand {\RRR}  {{\mathbb R}}     
\newcommand {\EEE}  {{\cal E}}        \newcommand {\HHH}  {{\cal H}}
 \newcommand {\beq}  {\begin{equation}} \newcommand {\eeq}  {\end{equation}}
\newcommand {\beqo}  {\begin{equation*}}  \newcommand {\eeqo}  {\end{equation*}}
      \newtheorem{theorem}{Theorem}
      \newtheorem{lemma}{Lemma}
      \newtheorem{zam}{Remark}
      \newtheorem{opr}{Definition}
\author{Alexander Plakhov\thanks{Department of Mathematics, University of Aveiro, Portugal
and Institute for Information Transmission Problems, Russia} \and Vera Roshchina\thanks{CIMA, University of \'{E}vora,  Portugal; Ci\^{e}ncia 2008}}
\title{Bodies invisible from one point}
\date{}
\begin{document}

\maketitle

\begin{abstract}

We show that there exist bodies with mirror surface invisible from
a point in the framework of geometrical optics. In particular,
we provide an example of a connected three-dimensional body
invisible from one point.
\end{abstract}

\begin{quote}
{\small {\bf Mathematics subject classifications:} 37D50, 49Q10}
\end{quote}

\begin{quote}
{\small {\bf Keywords:} invisible bodies, billiards, shape optimization.}
\end{quote}

The issue of invisibility attracts a lot of attention nowadays.
Various physical and technological ideas aiming at creation of
objects invisible for light rays are being widely discussed. A
brief review of the major developments is provided in our
recent work \cite{PR invisibility}. For a more entertaining
treatment of the subject the reader may refer to a recent
article in the BBC Focus Magazine \cite{BBC-Focus}. Most of the
recent developments are based on the wave representation of
light (e.g. \cite{ErginEtAl, ShurigEtAl, ValentineEtAl}).
However, here we study the notion of invisibility from the
viewpoint of geometrical optics. In other words, we consider a
model where a bounded open set $B$ with a piecewise smooth
boundary in Euclidean space $\RRR^d$, $d\geq 2$ represents a
physical body with mirror surface, and the billiard in the
complement of this domain, $\RRR^d \setminus B$, represents
propagation of light outside the body. This work continues the
series of results on invisibility obtained in \cite{0-resist,PR
invisibility,PR fractal}.

A semi-infinite broken line $l \subset \RRR^d \setminus B$ with
the endpoint at $O \in \RRR^d \setminus \bar B$ is called a
billiard trajectory emanating from $O$, if the endpoints of its
segments (except for $O$) are regular points of the body
boundary $\pl B$ and the outer normal to $\pl B$ at any such
point is the bisector of the angle formed by the segments
adjoining the point.

\begin{opr}\label{o invis 1 point}
\rm We say that $B$ is {\it invisible from a point} $O \not\in
\bar B$,  if for almost any ray with the vertex at $O$ there
exists a billiard trajectory emanating from $O$ with a finite
number of segments such that the first segment (adjoining $O$)
and the last (infinite) segment belong to the ray (see Fig.
\ref{fig:traj}).
\end{opr}

\begin{figure}[h]
\begin{picture}(0,90)
\rput(4,0.8){
\scalebox{1}{

\psline[linecolor=red,arrows=->,arrowscale=1.5](0,0)(2,0.5)
\psline[linecolor=red,arrows=->,arrowscale=1.5](2,0.5)(3.5,-0.5)(5,0)
\psline[linecolor=red,arrows=->,arrowscale=1.5](5,0)(5,1.25)(8,2)
\psline[linecolor=blue,linestyle=dashed](2,0.5)(5,1.25)
\psdots(0,0)
\rput(-0.35,-0.15){$O$}
}}
\end{picture}
\caption{A billiard trajectory emanating from $O$ with the first and last segments contained in a ray with vertex at $O$ (the body itself is not shown).
}
\label{fig:traj}
\end{figure}
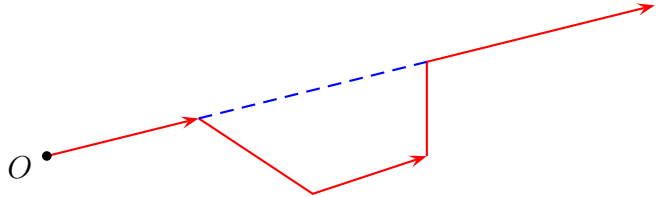

The main result of this note is the following

\begin{theorem}\label{t invis1point}
For each $d$ there exists a body $B \subset \RRR^d$ invisible
from a point. If $d \ge 3$ then the body is connected.
\end{theorem}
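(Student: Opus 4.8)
The plan is to realise $B$ as a bounded union of paraboloidal mirror arcs that together act as a \emph{relay} for the pencil of half-lines issuing from $O$: for almost every direction $v$ such that the ray $\{O+tv:t\ge0\}$ meets $B$, the billiard trajectory of $\RRR^d\setminus B$ that starts at $O$ in direction $v$ should enter $B$, reflect finitely many times, leave $B$, and then continue to infinity along the very same ray in the direction $v$. Rays from $O$ that miss $B$ are then admissible trivially, so such a $B$ is invisible from $O$; equivalently, one wants the billiard to act as the identity on the family of half-lines through $O$.

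Two elementary facts about a paraboloidal mirror with focus $O$ drive the construction. On its \emph{concave} side it collimates: any ray issuing from $O$ meeting it is reflected into a ray parallel to the mirror's axis. On its \emph{convex} side it does the opposite, with the feature that makes everything work — a beam of rays parallel to the axis striking the back of such a mirror is reflected into a bundle of rays each running \emph{radially outward from $O$} (the ray hitting a point $P$ of the mirror leaves along the half-line from $O$ through $P$; this is a one-line reflection computation). So a ray from $O$ can be collimated, its beam carried forward and around the bulk of $B$ (here one may, if desired, route it through a copy of a body invisible from that direction, which it traverses unchanged), and then \emph{de-collimated on the far side of $B$ by a convex focal arc}, emerging onto a half-line through $O$ at a point already past $B$ — hence free to escape, rather than converging back toward $O$ and looping forever. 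Matching the exit directions to the entry directions is arranged by making the collimating and de-collimating arcs confocal at $O$ and congruent, up to a computable reparametrisation of the pencil that one cancels by using a second collimate/de-collimate pair with the roles of the shape parameters interchanged. Thus $B$ will consist of finitely many confocal paraboloidal arcs — concave ones on the near side to collimate, convex ones on the far side to de-collimate outward — laid out along a common axis through $O$.

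Two things must be verified. The easy one: the net optical map of the relay is the identity on the pencil through $O$ — this reduces to the explicit action of each focal reflection, which is elementary. The hard one, and the real obstacle, is the geometric layout: one has to choose the arcs' shape parameters, sizes, positions and orientations, inside a bounded region, so that (i) every ray from $O$ either avoids $B$ completely or first meets the collimating arc and then follows the intended itinerary; (ii) each collimated beam lands on the next arc, on the correct side; (iii) the restored rays exit past all the arcs and never re-enter $B$ (this is exactly what the convex de-collimation buys, but it still has to be arranged); and (iv) no trajectory is trapped in an infinite reflection loop. The accompanying quantitative bookkeeping — each arc wide enough to intercept everything routed onto it, yet the whole configuration bounded — is routine once the combinatorial layout is fixed.

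For $d\ge3$ I would take the body of revolution of the planar configuration about the common axis $\ell$, which contains $O$. At any point of a surface of revolution the normal lies in the meridian half-plane through that point, so a billiard trajectory that starts in a meridian half-plane stays in it; and since $O\in\ell$, every ray issuing from $O$ — apart from the null set of rays along $\ell$ — lies in a single meridian half-plane, where the $d$-dimensional billiard is exactly the planar one just analysed. Hence the revolved body is invisible from $O$. To make it \emph{connected} I would join the revolved pieces, for instance by letting consecutive planar arcs share endpoints on $\ell$, or by linking the revolved surfaces with thin bridges placed in the geometric shadow of the active mirrors; either modification changes the billiard for at most a null set of rays from $O$, so the resulting connected body is still invisible from $O$. (For $d=2$ such joining is not available without spoiling the dynamics, and the body remains a disjoint union of arcs.)
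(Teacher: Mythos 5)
Your optical components are sound: both focal reflection facts for a confocal parabola are correct, and undoing the angular distortion $\cot(\theta'/2)=(a/a')\cot(\theta/2)$ of one collimate/de-collimate pair by a second pair with the shape parameters interchanged is exactly the right kind of cancellation. But the proof is not complete: you yourself flag the geometric layout as ``the real obstacle'' and then never resolve it, and that layout is where essentially all of the content of the theorem lives. It is not routine bookkeeping. Concretely: for a beam collimated along the common axis to reach the \emph{convex} side of the second confocal parabola, that parabola must be nested inside the first, which forces $a'<a$; the same reasoning applied to the second pair forces $a<a'$. So the naive coaxial arrangement with all arcs opening the same way is self-contradictory, and any rescue (reversing the beam, placing the second pair on the other side of $O$) makes your own conditions (i)--(iv) --- no arc hit out of order, no beam obstructed by another arc, exit rays clearing everything, no trapping --- genuinely delicate, and they are nowhere verified. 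A second, smaller gap: a union of mirror \emph{arcs} is not a body in the sense of the paper (a bounded open set with piecewise smooth boundary); each arc must be thickened, and the newly created boundary pieces must be shown never to intercept a relevant trajectory.

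For comparison, the paper resolves exactly these difficulties by different means: the reflectors are arcs of a confocal ellipse/hyperbola pair satisfying (\ref{usl-e0})--(\ref{usl-e}), assembled into curvilinear triangles whose third side lies along a ray through the focus $F_1$ and hence is met by only a null set of rays; the focal properties of the two conics together with the bisector characterization of Lemma \ref{l charsvvo} show that the two reflections inside one such piece return the particle to a line through $F_1$ while strictly decreasing its angle with the axis, and a single dilated copy of the body, touching the first at two points, undoes that change. Ordering and obstruction are controlled by the explicit confocality conditions rather than deferred. Your meridian-plane reduction for $d\ge 3$ agrees with the paper and is fine, but it rests on the unfinished planar construction, and your connectedness fix via shared endpoints or bridges in the geometric shadow would also need the (absent) layout to make sense.
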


Not much is known today about invisibility in the billiard
setting. In the limit where the point of reference $O$ goes to
infinity the notion of invisibility from a point is transformed
into the notion of invisibility in one direction (see
\cite{0-resist}, \cite{PR invisibility} and \cite{PR fractal}).
There exist two- and three-dimensional bodies invisible in one
direction \cite{0-resist} and three-dimensional bodies
invisible in two orthogonal directions \cite{PR invisibility}.
It is straightforward to generalize these results to obtain
$d$-dimensional bodies that are invisible in $d-1$ mutually
orthogonal directions. On the other hand, there are no bodies
invisible in {\it all} directions (or, equivalently, invisible
from {\it all} points).

The proof of the theorem is based on a direct construction. In the proof we use the following lemma.

\begin{lemma}[A characteristic property of a bisector in a triangle]\label{l charsvvo} Consider a triangle $ABC$ and  a point $D$ lying on the side
$AC$. Let $AB = a_1$,\, $BC = a_2$,\, $AD = b_1$,\, $DC = b_2$,
and $BD = f$ (see Fig. \ref{fig bisector}).
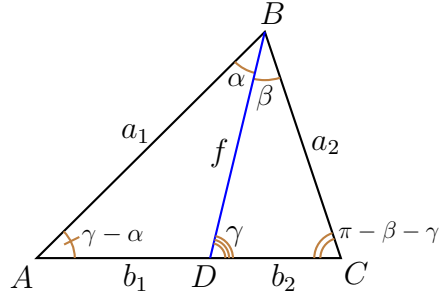
\begin{figure}[h]
\begin{picture}(0,120)
\rput(1.5,1.5){
\scalebox{1}{
\rput(4,-1){
\pspolygon(0,0)(4,0)(3,3)
\psarc[linecolor=brown](3,3){0.56}{225.3}{256.5}
\psarc[linecolor=brown](3,3){0.64}{256.7}{288.2}
\rput(2.63,2.36){\scalebox{0.9}{$\al$}}
\rput(3,2.13){\scalebox{0.9}{$\bt$}}
\psline[linewidth=0.8pt,linecolor=blue](3,3)(2.28,0)
\rput(-0.2,-0.25){$A$}
\rput(3.1,3.25){$B$}
\rput(4.2,-0.2){$C$}
\rput(2.2,-0.23){$D$}
\psarc[linecolor=brown](2.28,0){0.3}{0}{76.2}
\psarc[linecolor=brown](2.28,0){0.25}{0}{76.2}
\psarc[linecolor=brown](2.28,0){0.2}{0}{76.2}
\rput(2.6,0.3){$\gam$}
\rput(1.3,1.65){$a_1$}
\rput(3.8,1.5){$a_2$}
\rput(1.3,-0.25){$b_1$}
\rput(3.25,-0.25){$b_2$}
\rput(2.4,1.4){$f$}
\psarc[linecolor=brown](4,0){0.35}{108.5}{180}
\psarc[linecolor=brown](4,0){0.27}{108.5}{180}
\psarc[linecolor=brown](0,0){0.5}{0}{45}
\psline[linecolor=brown](0.36,0.18)(0.56,0.28)
\rput(1,0.3){\scalebox{0.8}{$\gam-\al$}}
\rput(4.63,0.35){\scalebox{0.75}{$\pi-\bt-\gam$}}
}
}}
\end{picture}
\caption{A characteristic property of a bisector in a triangle.
}
\label{fig bisector}
\end{figure}
The segment $BD$ is the bisector of the angle $\measuredangle ABC$ if and only if $$(a_1 + b_1)(a_2 - b_2) = f^2.$$
\end{lemma}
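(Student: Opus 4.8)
The plan is to introduce the angles marked in Figure~\ref{fig bisector} and to reduce the asserted identity to the single condition $\alpha=\beta$, which says nothing other than that $BD$ bisects $\measuredangle ABC$. Set $\alpha=\measuredangle ABD$, $\beta=\measuredangle DBC$ and $\gamma=\measuredangle BDC$, so that $\measuredangle BDA=\pi-\gamma$; the exterior-angle relation in the triangle $ABD$ gives $\measuredangle BAC=\gamma-\alpha$, and the angle sum in the triangle $BDC$ gives $\measuredangle BCA=\pi-\beta-\gamma$. Note that $\gamma-\alpha$ and $\beta+\gamma$ lie strictly between $0$ and $\pi$ (being respectively an angle of $ABC$ and $\pi$ minus an angle of $ABC$), that $\tfrac{\alpha-\beta}{2}$ lies strictly between $-\pi/2$ and $\pi/2$, and that $BD$ bisects $\measuredangle ABC$ exactly when $\alpha=\beta$. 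Applying the law of sines in the triangles $ABD$ and $BDC$ yields
\[
a_1=\frac{f\sin\gamma}{\sin(\gamma-\alpha)},\qquad b_1=\frac{f\sin\alpha}{\sin(\gamma-\alpha)},\qquad a_2=\frac{f\sin\gamma}{\sin(\beta+\gamma)},\qquad b_2=\frac{f\sin\beta}{\sin(\beta+\gamma)}.
\]

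Next I would simplify the two factors occurring in the lemma. Using the sum-to-product identities $\sin\gamma+\sin\alpha=2\sin\tfrac{\gamma+\alpha}{2}\cos\tfrac{\gamma-\alpha}{2}$ and $\sin\gamma-\sin\beta=2\sin\tfrac{\gamma-\beta}{2}\cos\tfrac{\gamma+\beta}{2}$, together with $\sin2x=2\sin x\cos x$ for the two denominators, one obtains
\[
a_1+b_1=f\,\frac{\sin\frac{\alpha+\gamma}{2}}{\sin\frac{\gamma-\alpha}{2}},\qquad a_2-b_2=f\,\frac{\sin\frac{\gamma-\beta}{2}}{\sin\frac{\gamma+\beta}{2}},
\]
and hence
\[
(a_1+b_1)(a_2-b_2)-f^2=f^2\,\frac{\sin\frac{\alpha+\gamma}{2}\sin\frac{\gamma-\beta}{2}-\sin\frac{\gamma-\alpha}{2}\sin\frac{\gamma+\beta}{2}}{\sin\frac{\gamma-\alpha}{2}\sin\frac{\gamma+\beta}{2}}.
\]
Expanding both products in the numerator with the product-to-sum identity, the term $\tfrac12\cos\frac{\alpha+\beta}{2}$ appears in each and cancels, and the rest collapses to $\sin\gamma\,\sin\frac{\alpha-\beta}{2}$. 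Since $\gamma\in(0,\pi)$ and $\tfrac{\alpha-\beta}{2}\in(-\pi/2,\pi/2)$, while the denominator is nonzero, the quantity above vanishes if and only if $\alpha=\beta$; this is precisely the claimed equivalence.

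I do not expect any real obstacle: the only care needed concerns the ranges of the angles, which ensure that the denominators never vanish and that $\alpha=\beta$ is the unique solution of $\sin\frac{\alpha-\beta}{2}=0$ that one extracts in the last step, together with the bookkeeping of the trigonometric identities. If one prefers to avoid trigonometry entirely, the same conclusion drops out of Stewart's theorem, which here reads $f^2(b_1+b_2)=a_1^2b_2+a_2^2b_1-b_1b_2(b_1+b_2)$: inserting this into $(a_1+b_1)(a_2-b_2)=f^2$ and clearing the denominator, the relation factors as $(a_2b_1-a_1b_2)(a_1+b_1+b_2-a_2)=0$. The second factor equals $AB+AC-BC$, which is strictly positive by the triangle inequality, so the identity is equivalent to $a_2b_1=a_1b_2$, i.e.\ to $AD:DC=AB:BC$; by the angle-bisector theorem and its converse, this holds if and only if $BD$ bisects $\measuredangle ABC$.
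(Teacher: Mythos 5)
Your main argument is correct and is essentially the route the paper takes for the harder direction: the same law-of-sines expressions for $a_1+b_1$ and $a_2-b_2$, the same sum-to-product reduction, and the same conclusion $\alpha=\beta$ from the resulting cosine/product identity (your collapse of the numerator to $\sin\gamma\,\sin\frac{\alpha-\beta}{2}$ checks out). The differences are worth noting. The paper treats the two directions asymmetrically: the forward implication is dispatched by observing that the claimed identity $(a_1+b_1)(a_2-b_2)=f^2$ follows algebraically from the two classical bisector properties $a_1/a_2=b_1/b_2$ and $a_1a_2-b_1b_2=f^2$, and only the converse uses trigonometry. Your version runs the trigonometric computation as a single chain of equivalences, so both directions come out at once, which is slightly more economical. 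Your closing alternative via Stewart's theorem is a genuinely different and arguably cleaner argument: the factorization $(a_2b_1-a_1b_2)(a_1+b_1+b_2-a_2)=0$ is correct, the second factor is positive by the triangle inequality, and the reduction to the angle-bisector theorem avoids trigonometry entirely; it also makes transparent why the identity characterizes the bisector, namely that it is the product of the two classical characterizations (a) and (b) in disguise. Either route is a complete proof; no gaps.
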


\begin{proof}
Consider the following relations on the values $a_1$,\, $a_2$,\, $b_1$,\, $b_2$, and $f$:

(a)\ \, $a_1/a_2 = b_1/b_2$;

(b)\ \, $a_1 a_2 - b_1 b_2 = f^2$;

(c)\ \, $(a_1 + b_1)(a_2 - b_2) = f^2$.

The equalities (a) and (b) are well known; each of them is a characteristic property of triangle bisector as well. It is interesting to note that each of these algebraic relations is a direct consequence of the two others.

Assume that $BD$ is the bisector of the angle $\measuredangle ABC$. Then the equalities (a) and (b) are true, therefore (c) is also true. The direct statement of the lemma is thus proved.

To derive the inverse statement, we need to apply the sine rule and some trigonometry. Denote $\al = \measuredangle ABD$,\, $\bt = \measuredangle CBD$, and $\gam = \measuredangle BDC$ (see Fig. \ref{fig bisector}). Applying the sine rule to $\triangle ABD$, we have
$$
\frac{a_1}{\sin\gam} = \frac{b_1}{\sin\al} = \frac{f}{\sin(\gam-\al)},
$$
and applying the sine rule to $\triangle BDC$, we have
$$
\frac{a_2}{\sin\gam} = \frac{b_2}{\sin\bt} = \frac{f}{\sin(\gam+\bt)}.
$$
This implies that
$$
a_1 + b_1 = \frac{f}{\sin(\gam-\al)}\, (\sin\gam + \sin\al) = f\, \frac{\sin\frac{\gam+\al}{2}}{\sin\frac{\gam-\al}{2}},
$$
$$
a_2 - b_2 = \frac{f}{\sin(\gam+\bt)}\, (\sin\gam - \sin\bt) = f\, \frac{\sin\frac{\gam-\bt}{2}}{\sin\frac{\gam+\bt}{2}}.
$$
Using the equality (c), one gets
$$
f^2\ \frac{\sin\frac{\gam+\al}{2}\, \sin\frac{\gam-\bt}{2}}{\sin\frac{\gam-\al}{2}\, \sin\frac{\gam+\bt}{2}}\, =\, f^2,
$$
whence
$$
\sin\frac{\gam+\al}{2}\, \sin\frac{\gam-\bt}{2}\, =\, \sin\frac{\gam-\al}{2}\, \sin\frac{\gam+\bt}{2},
$$
After some algebra as a result we have
$$
\cos \Big(\gam + \frac{\al-\bt}{2} \Big)\, =\, \cos \Big(\gam - \frac{\al-\bt}{2}\Big).
$$
The last equation and the conditions $0 < \al, \ \bt, \ \gam < \pi$ imply that $\al = \bt$. The inverse statement of the lemma is also proved.
\end{proof}

\begin{proof} \rput(-0.3,-0.03){\psframe[linecolor=white,fillstyle=solid,fillcolor=white](0,0)(0.1,0.1)} \!\!{\it of Theorem \ref{t invis1point}.}
Consider confocal ellipse and hyperbola on the plane. In a
convenient coordinate system in which the major and minor axes
of the ellipse coincide with the coordinate axes, the ellipse
is given by the equation
$$
\frac{x^2}{a^2} + \frac{y^2}{b^2} = 1, \qquad a>b>0,
$$
and the hyperbola is given by the equation
$$
\frac{x^2}{\al^2} - \frac{y^2}{\bt^2} = 1, \qquad
$$
with the relation \beq\label{usl-e0} c^2 = a^2 - b^2 = \al^2 +
\bt^2 \eeq ensuring that the ellipse and the parabola are
confocal. Observe that the two foci $F_1$ and $F_2$ are located
at $(\pm c, 0)$. Finally, we require the intersection of the
ellipse with each of the branches of the hyperbola to be
in-line with the relevant focus (the dashed line on
Fig.~\ref{fig simp ellipse and parabola}).
\begin{figure}[h]
\centering \includegraphics[width=200pt, keepaspectratio ]{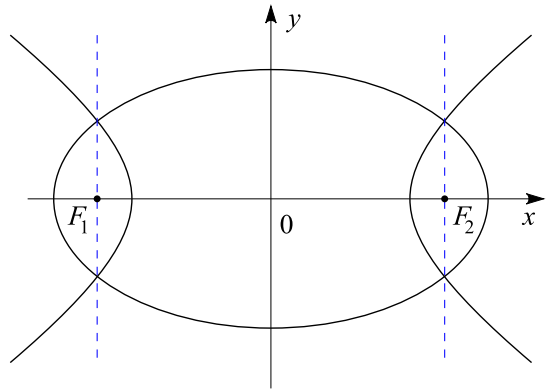}
\caption{An ellipse and hyperbola satisfying the conditions
\eqref{usl-e0} and \eqref{usl-e}.} \label{fig simp ellipse and parabola}
\end{figure}
It is an elementary exercise to check that this property is
guaranteed by the condition
 \beq\label{usl-e}
\frac{1}{\bt^2} - \frac{1}{b^2} = \frac{1}{c^2}. \eeq

On Fig. \ref{fig ellipse and parabola} the ellipse is indicated
by $\EEE$, the right branch of the hyperbola by $\HHH$ (the
other branch is not considered), and the foci by $F_1$ and
$F_2$.
\begin{figure}[h]
\centering \includegraphics[width=200pt, keepaspectratio ]{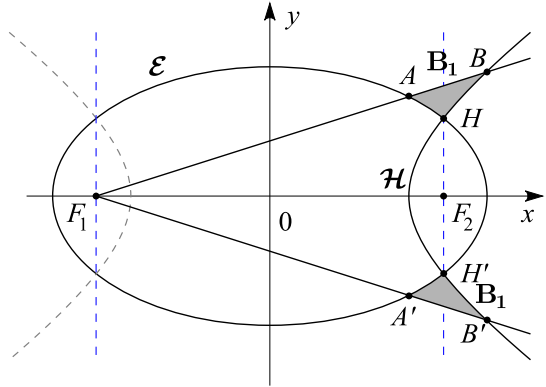}
\caption{A body having zero resistance to a flow of particles emanating from a point.}
\label{fig ellipse and parabola}
\end{figure}

Choose an arbitrary point $B$ on $\HHH$, such that it lies
outside of the ellipse, and denote by $A$ the intersection of
the segment $F_1B$ with the ellipse. Let $H$ be the closest
intersection point with the ellipse as we move from $B$ along
the branch of the hyperbola towards the ellipse  (see
Fig.~\ref{fig ellipse and parabola}, where the relevant point
$B$ happens to belong to the top segment of the hyperbola's
branch $\HHH$). We are interested in the curvilinear triangle
$ABH$ and its symmetric (w.r.t. the major axis of the ellipse)
counterpart $A'B'H'$. We denote the union of the aforementioned
curvilinear triangles by $\mathbf{B_1}$ (shown in grey color on
Fig.~\ref{fig ellipse and parabola}). By construction
\beq\label{85e1} \measuredangle AF_1F_2 = \measuredangle
A'F_1F_2. \eeq

Consider a particle emanating from $F_1$ and making a
reflection from $\mathbf{B_1}$. The first reflection is from
one of the arcs $AH$ or $A'H'$. Without loss of generality the
point of first reflection $C$ lies on $A'H'$ (see Fig.~\ref{fig
ellipse and parabola particle}).
\begin{figure}[h]
\centering \includegraphics[width=200pt, keepaspectratio ]{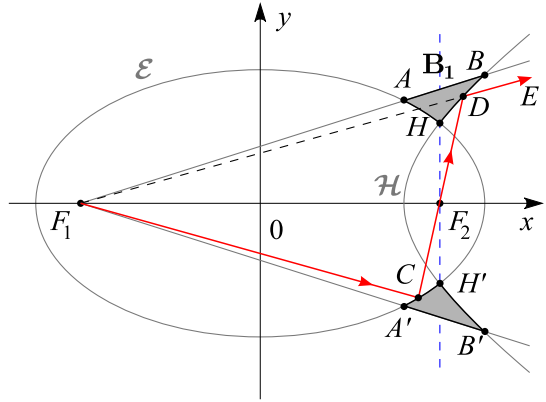}
\caption{The trajectory of a particle emanating from $F_1$.}
\label{fig ellipse and parabola particle}
\end{figure}
 We have
\beq\label{85e2} \measuredangle CF_1F_2 < \measuredangle
A'F_1F_2. \eeq After the reflection the particle passes through
the focus $F_2$ and then intersects $\HHH$ at a point $D$
(recall that by construction the segments $HF_2$ and $H'F_2$
are orthogonal to the major axis $F_1F_2$, hence, the
intersection of the ray that emerges from $C$ and passes
through $F_2$ necessarily reaches the hyperbola outside of the
ellipse).

By the focal property of ellipse we have \beq\label{85e3}
|F_1C| + |F_2C| = |F_1H| + |F_2H|, \eeq and by the focal
property of hyperbola, \beq\label{85e4} |F_1D| - |F_2D| =
|F_1H| - |F_2H|. \eeq Multiplying both parts of (\ref{85e3})
and (\ref{85e4}) and bearing in mind that $F_1F_2$ is
orthogonal to $F_2H$, we obtain \beq\label{85e5} (|F_1C| +
|F_2C|)(|F_1D| - |F_2D|) = |F_1H|^2 - |F_2H|^2 = |F_1F_2|^2.
\eeq Applying Lemma \ref{l charsvvo} to the triangle $CF_1D$
and using (\ref{85e5}) we conclude that $F_1F_2$ is a bisector
of this triangle, that is, \beq\label{85e6} \measuredangle
CF_1F_2 = \measuredangle DF_1F_2. \eeq Using (\ref{85e1}),
(\ref{85e2}), and (\ref{85e5}), we obtain that $\measuredangle
DF_1F_2 < \measuredangle AF_1F_2$, therefore $D$ lies on the
arc $HB$. After reflecting at $D$ the particle moves along the
line $DE$ containing $F_1$. This property can be interpreted as
 $B_1$ having zero resistance to the flow of particles
emanating from $F_1$.

Now consider the body $\mathbf{B_2}$ obtained  from
$\mathbf{B_1}$ by dilation with the center at $F_1$ and such
that $\mathbf{B_1}$ and $\mathbf{B_2}$ have exactly two points
in common (in Fig. \ref{fig invisible from 1 point} the
dilation coefficient is greater than 1). A particle emanating
from $F_1$ and reflected from $\mathbf{B_1}$ at $C$ and $D$,
further moves along the line $DE$ containing $F_1$, besides the
equality (\ref{85e6}) takes place.
\begin{figure}[h]
\centering \includegraphics[width=260pt, keepaspectratio ]{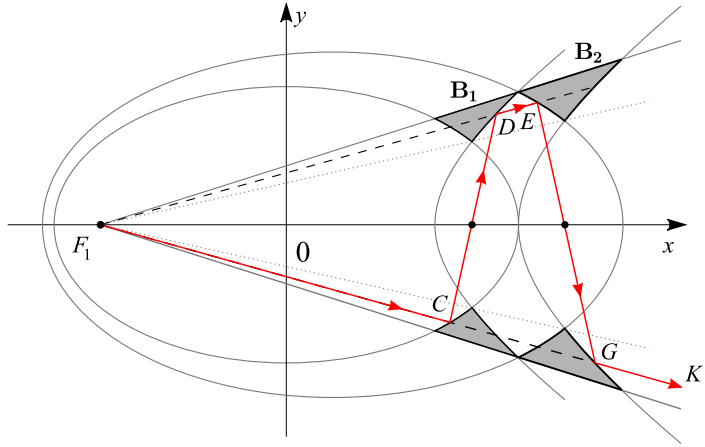}
\caption{A body invisible from one point.} \label{fig invisible
from 1 point}
\end{figure}

Then the particle makes two reflections from $\mathbf{B_2}$ at
$E$ and $G$ and moves freely afterwards along a line containing
$F_1$, besides the equality \beq\label{85e7} \measuredangle
EF_1F_2 = \measuredangle GF_1F_2. \eeq takes place. Using
(\ref{85e6}) and (\ref{85e7}), as well as the (trivial)
equality $\measuredangle DF_1F_2 = \measuredangle EF_1F_2$, we
find that \beqo\label{85e8} \measuredangle CF_1F_2 =
\measuredangle GF_1F_2. \eeqo This means that the initial
segment $F_1C$ of the trajectory and its final ray $GK$ lie in
the same ray $F_1K$. The rest of the trajectory, the broken
line $CDEG$, belongs to the convex hull of the set
$\mathbf{B_1} \cup \mathbf{B_2}$. Thus we have proved that
$\mathbf{B_1} \cup \mathbf{B_2}$ is a two-dimensional body
invisible from the point $F_1$.

In the case of a higher dimension $d$ the  (connected) body
invisible from $F_1$ is obtained by rotation of $\mathbf{B_1}
\cup \mathbf{B_2}$ about the axis $F_1 F_2$: a
three-dimensional body is shown on Fig.~\ref{fig first
rotation}. Observe that because of the rotational symmetry of
the body the trajectory of the particle emitted from the
relevant focal point (that corresponds to $F_1$ in the
two-dimensional case) lies within a plane that contains the
major axis of the relevant ellipsoids.
\begin{figure}[h]
\centering \includegraphics[height=261pt, keepaspectratio ]{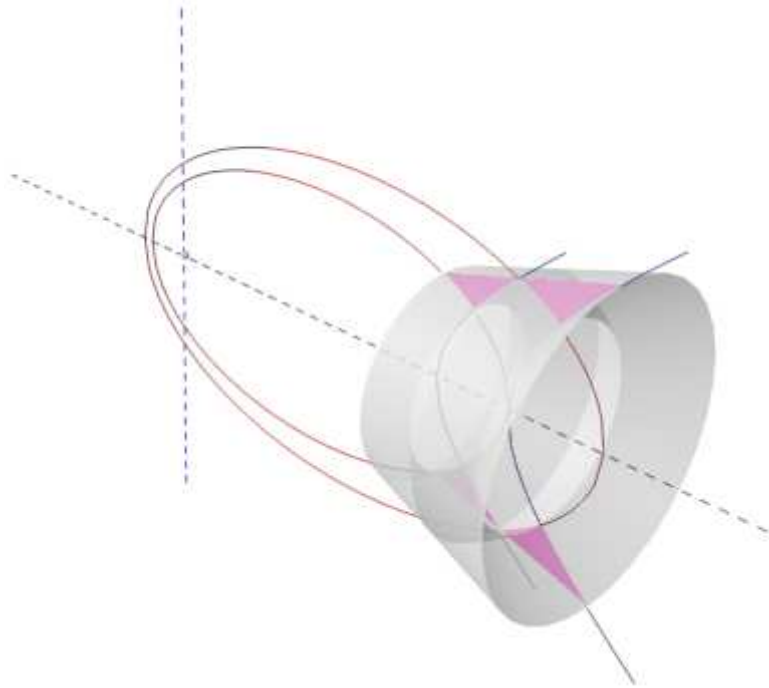}
\caption{A connected body invisible from one point.} \label{fig first rotation}
\end{figure}

Another example of a three-dimensional body invisible from a
point can be obtained by rotating the two-dimensional
construction around the axis perpendicular to the major axes of
the ellipses and passing through the focal point $F_1$ (see
Fig.~\ref{fig second rotation}).
\begin{figure}[h]
\centering \includegraphics[height=211pt, keepaspectratio ]{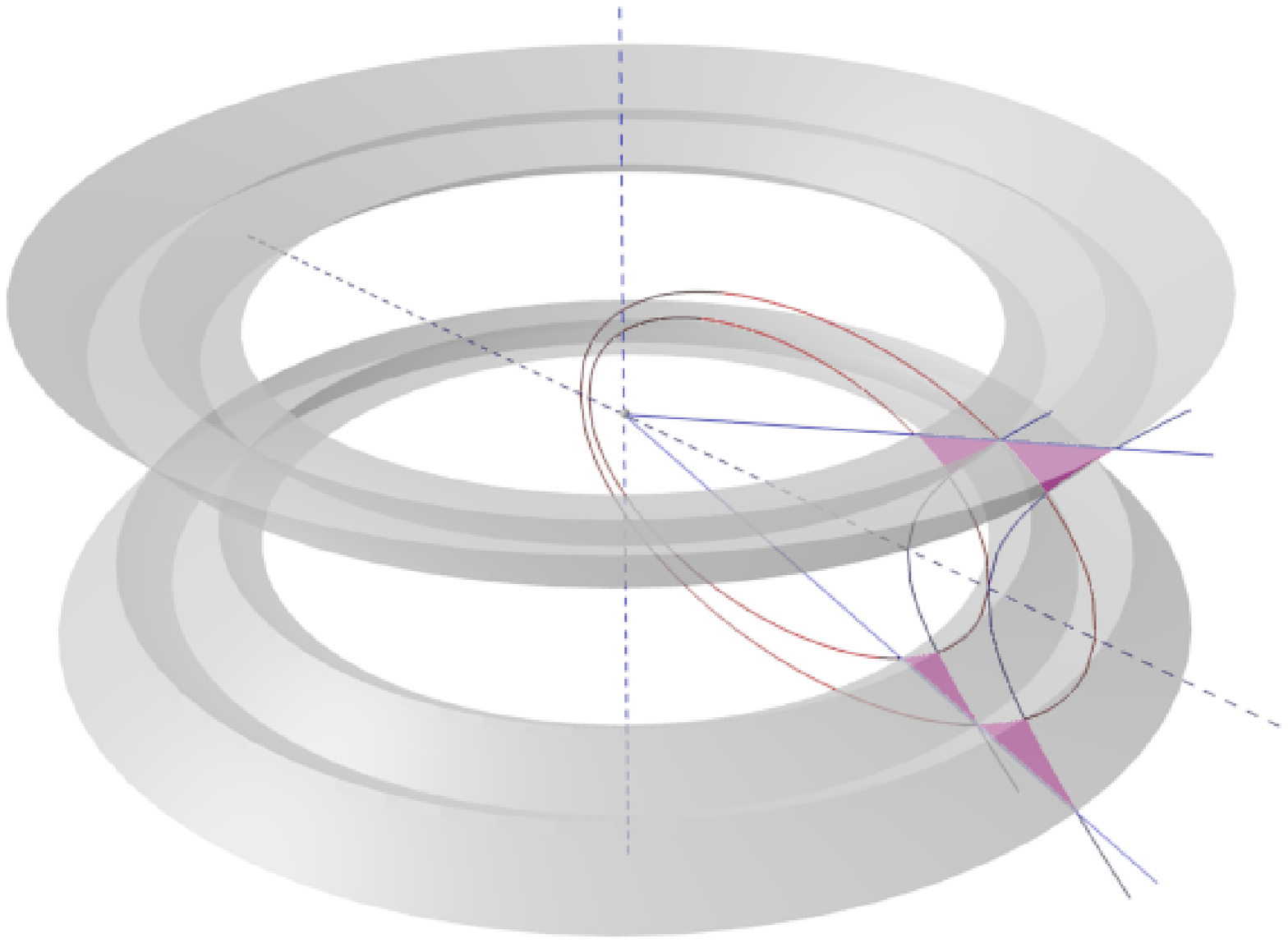}
\caption{Another body invisible from one point} \label{fig second rotation}
\end{figure}

\end{proof}

\begin{zam}\label{z 85}\rm
From the proof of the theorem we see that the invisible body is determined by 5 parameters: $a,\, b,\, \al,\, \bt$, and the inclination of the line $F_1B$, with 2 conditions imposed by (\ref{usl-e0}) and (\ref{usl-e}). Thus, the construction is defined by three parameters. One of them is the scale, and the second and third ones can be taken to be the angles $\measuredangle HF_1F_2$ and $\measuredangle BF_1F_2$.
\end{zam}

\section*{Acknowledgements}

This work was partly supported by {\it FEDER} funds through {\it COMPETE}--Operational Programme Factors of Competitiveness and by Portuguese funds through the {\it Center for Research and Development in Mathematics and Applications} (CIDMA) and the Portuguese Foundation for Science and Technology (FCT), within project PEst-C/MAT/UI4106/2011 with COMPETE number FCOMP-01-0124-FEDER-022690 and project PTDC/MAT/113470/2009.

\end{document}